\documentclass[11pt]{amsart}
\usepackage{amsmath,amsfonts,amsthm,amsopn,amssymb,latexsym,soul}
\usepackage{array}
\usepackage{cite}
\usepackage{color,enumitem,graphicx}
\usepackage[colorlinks=true,urlcolor=blue,
citecolor=red,linkcolor=blue,linktocpage,pdfpagelabels,
bookmarksnumbered,bookmarksopen]{hyperref}
\usepackage[english]{babel}
\usepackage[left=2.61cm,right=2.61cm,top=2.73cm,bottom=2.73cm]{geometry}

\usepackage{bm}

\usepackage{lineno}
\usepackage{epsfig}
\usepackage{graphics}
\usepackage{float}
\usepackage{multirow}
\usepackage{lineno}
\usepackage{fullpage}
\usepackage[normalem]{ulem} 
\usepackage{xspace}
\usepackage{wrapfig}
\usepackage{color}
\usepackage{indentfirst}
\usepackage{comment}

\theoremstyle{definition}
\newtheorem{theorem}{Theorem}
\newtheorem{defi}{Definition}

\newtheorem{ex}{Example}

\numberwithin{defi}{section} 
\numberwithin{theorem}{section}  
\numberwithin{prop}{section}  
\numberwithin{lem}{section}
\numberwithin{equation}{section} 
\numberwithin{rmk}{section} 
\numberwithin{cond}{section}

\allowdisplaybreaks

\title[A comparison of the weakest contractive conditions for Banach and Kannan mappings]{A comparison of the weakest contractive conditions for Banach and Kannan mappings}

\author[S. Hashimoto]{Shunya Hashimoto}
\address{Faculty of Science, Kyoto University,
\newline\indent
Oiwake-tyou Kitashirakawa, Sakyo-ku Kyoto-shi, 606-8502, Japan}
\email{hashimoto.shunya.7m@kyoto-u.ac.jp}

\author[M. Kikkawa]{Misako Kikkawa}
\address{Department of Mathematics, Faculty of Science, Saitama University,
\newline\indent
Shimo-Okubo 255, Sakura-ku Saitama-shi, 338-8570, Japan}
\email{misakokikkawa0731@gmail.com}

\author[S. Machihara]{Shuji Machihara}
\email{machihara@rimath.saitama-u.ac.jp}

\author[A. Saghir]{Aqib Saghir}
\email{saghir.a.460@ms.saitama-u.ac.jp}

\makeatletter
\@namedef{subjclassname@2020}{\textup{2020} Mathematics Subject Classification}
\makeatother

\thanks{
This work was supported by JST CREST, Japan, Grant Number JPMJCR24Q6.
}
\subjclass[2020]{54H25, 47H10}
\keywords{fixed point theorem, Banach mapping, Kannan mapping}

\begin{document}
\begin{abstract}
We study the weakest convergence-type conditions for fixed point results for Banach and Kannan mappings. Building on Suzuki\rq{}s weakest condition for Banach mappings and our previous result for Kannan mappings, we compare convergence conditions defined along Picard sequences. We give a direct proof that several weakest convergence conditions are equivalent for Kannan-type mappings on complete metric spaces. 
This proof is achieved without assuming the completeness or the convergence of Picard sequences; it deduces the equivalence only from the existence of fixed points.
In contrast, we construct a counterexample showing that the corresponding equivalence fails for Banach contractions. Finally, we prove that this discrepancy disappears on G-complete metric spaces, clarifying the role of completeness in weakest fixed point theory.
\end{abstract}

\maketitle

\date{}
\maketitle

\section{Introduction}

Fixed point theory is a fundamental branch of nonlinear analysis with wide-ranging applications in differential equations, optimization, and iterative processes. Since the classical Banach contraction principle \cite{Banach}, considerable effort has been devoted to weakening contractive assumptions while preserving the existence, uniqueness, and convergence of fixed points. Among notable alternatives to Banach contractions are Kannan-type mappings, introduced by Kannan in 1968 \cite{Kannan}, whose defining inequality does not require continuity of the mapping. This intrinsic difference has made Kannan contractions an important object of study and has motivated numerous extensions and applications in generalized metric settings and nonlinear frameworks \cite{Khan, Kuczma, Branciari, Regan, Ariza, Roy, Huang}.
Parallel to these developments, a unified theory of contractive mappings emerged through the works of Ćirić \cite{Ciric}, Jachymski \cite{Jachymski}, and Matkowski \cite{Matkowski}, culminating in what is now known as the CJM framework. This approach characterizes contractive behavior via convergence properties of iterates rather than explicit contractive constants and encompasses classical contractions such as Banach, Meir–Keeler \cite{Meir}, Boyd–Wong \cite{Boyd}, and Browder contractions \cite{Browder}.\\
A major breakthrough in identifying weakest possible contractive conditions was achieved by Suzuki. In 2008, Suzuki \cite{Suzuki1} introduced a generalized Banach contraction principle that characterizes the completeness of the underlying metric space. Later, in 2017, Suzuki \cite{S17} established the weakest known contractive conditions for Banach-type mappings in complete metric spaces, showing that the following conditions are equivalent to the existence of a unique fixed point and the convergence of Picard iterates:
\begin{enumerate}
\item[(i)] \(x \neq y \;\Longrightarrow\; d(Tx,Ty) < d(x,y).\)
\item[(ii)] For every \(\varepsilon>0\), there exists \(\delta>0\) such that 
\[d(T^{i}x,T^{j}x)<\varepsilon+\delta
\;\Longrightarrow\;
d(T^{i+1}x,T^{j+1}x)\le \varepsilon,
\quad \forall\, i,j\in\mathbb{N}\cup\{0\}.\]
\end{enumerate}
These results provide a close relationship between contractive behavior and completeness and establish the weakest known condition for Banach contractions in complete metric spaces. Despite the strength of Suzuki\rq{}s results, their scope is primarily confined to Banach-type contractions and does not explicitly address Kannan-type mappings, whose contractive structure is fundamentally different. In our recent work \cite{HKMS25}, we identified the weakest fixed point conditions for Kannan-type contraction on complete metric spaces by applying the CJM framework and restricting the contractive assumptions to Picard sequences. In particular, we showed that a Kannan-type contraction together with the weakest convergence condition along Picard iterates, namely,
\begin{enumerate}
\item[(iii)]For every $\varepsilon>0$, there exists $\delta>0$ such that $\frac{1}{2}\left(d(T^{i}x,T^{i+1}x)+d(T^{j}x,T^{j+1}x)\right)<\varepsilon+\delta\\
\Longrightarrow
d(T^{i+1}x,T^{j+1}x)\le \varepsilon,
\quad \forall\, i,j\in\mathbb{N}\cup\{0\}.$
\end{enumerate}
is equivalent to the existence and uniqueness of fixed points for Kannan-type mappings in complete metric spaces.

In this paper, we compare the weakest conditions for both Banach and Kannan mappings. First, we analyze the equivalence of conditions based on convergence for Kannan-type contraction mappings, providing a direct proof that depends only on the existence of fixed points, rather than on completeness or convergence of iterative sequences to fixed points. This reveals that, unlike for Banach contractions, several seemingly different conditions are equivalent in the Kannan setting. Next, we present a counterexample showing that this equivalence does not hold for Banach contraction mappings. This suggests a fundamental difference between Banach and Kannan mappings in terms of the weakest conditions.
Finally, we prove that by slightly strengthening the notion of completeness—using the concept of G-completeness, originally introduced by Grabiec \cite{G88}, the equivalence of conditions can be recovered for Banach-type mappings. Thus, this work clarifies the precise relationship between Banach and Kannan contractions under the weakest assumptions, highlights the role of completeness in fixed point theory, and extends Suzuki\rq{}s framework to a broader class of nonlinear mappings.

The paper is organized as follows. In Section 2, we collect known weakest fixed point conditions for Banach and Kannan mappings that will be used throughout the paper. Section 3 is devoted to the Kannan case, where we establish the equivalence of several convergence-based weakest conditions by direct arguments. Section 4 presents a counterexample showing that the corresponding equivalence fails for Banach contraction mappings on complete metric spaces. Finally, in Section 5, we introduce G-completeness and show that, under this slightly stronger completeness assumption, the weakest conditions for Banach mappings become equivalent again.

\section{Preliminaries and examples}

In this section, we recall known results on the CJM condition and the weakest fixed point conditions for Banach and Kannan mappings. These results serve as a starting point for the comparison carried out in the subsequent sections.

First, we recall the Banach-type fixed point theorem under the CJM condition introduced by Ćirić \cite{Ciric}, Jachymski \cite{Jachymski}, and Matkowski \cite{Matkowski}.
\begin{theorem}\cite{Ciric, Matkowski, Jachymski} \ 
\label{CJMB}
Let $(X,d)$ be a complete metric space and suppose that the mapping $T: X\to X$ satisfies the following.
\begin{enumerate}
\item[(i)] $x\not=y$ implies $d(Tx,Ty)<d(x,y)$.
\item[(ii)] For every $\varepsilon>0$, there exists $\delta>0$ such that $d(x,y)<\varepsilon+\delta$ implies $d(Tx,Ty)\le \varepsilon$.
\end{enumerate}
Then, $T$ has a unique fixed point.
\end{theorem}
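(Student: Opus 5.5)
We follow the standard Picard-iteration argument. Fix $x_0\in X$ and set $x_n=T^nx_0$. If $x_n=x_{n+1}$ for some $n$ we already have a fixed point, so assume $x_n\neq x_{n+1}$ for all $n$. Write $a_n=d(x_n,x_{n+1})$. By (i), $a_{n+1}<a_n$, so $a_n$ decreases to some $a\ge 0$. We claim $a=0$. Suppose $a>0$ and take $\varepsilon=a$ in (ii), which gives some $\delta>0$. For large $n$ we have $a\le a_n<a+\delta$, and then (ii) yields $a_{n+1}\le a$. This contradicts $a_{n+1}>a_{n+2}\ge a$. Hence $a_n\to 0$.

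The main step is to show that $(x_n)$ is Cauchy. Suppose not. Then there is $\varepsilon>0$ with $\limsup_{m,n}d(x_m,x_n)>2\varepsilon$. Let $\delta$ be given by (ii) for this $\varepsilon$; shrinking it, we may assume $\delta<\varepsilon$. Choose $N$ with $a_n<\delta/2$ for $n\ge N$, and pick $m>n\ge N$ with $d(x_n,x_m)>2\varepsilon$.

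We show by induction that $d(x_n,x_j)<\varepsilon+\delta/2$ for every $j$ with $n\le j$, which contradicts the choice of $m$. The base case is $d(x_n,x_n)=0$. For the inductive step, assume $d(x_n,x_j)<\varepsilon+\delta/2$. Then
\[
d(x_n,x_{j+1})\le d(x_n,x_{n+1})+d(x_{n+1},x_{j+1}).
\]
Since $d(x_n,x_j)<\varepsilon+\delta$, (ii) gives $d(x_{n+1},x_{j+1})\le\varepsilon$. Together with $a_n<\delta/2$ this yields $d(x_n,x_{j+1})<\varepsilon+\delta/2$, completing the induction. (Strictly one also needs $x_n\ne x_j$ for (ii) to be meaningful, but (ii) as stated holds for all pairs, so no care is needed.) Therefore $(x_n)$ is Cauchy, and by completeness $x_n\to z$ for some $z\in X$.

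Finally, (i) implies $d(Tx,Ty)\le d(x,y)$, so $T$ is continuous. Hence $x_{n+1}=Tx_n\to Tz$, and so $Tz=z$. For uniqueness, if $Tz=z$, $Tw=w$ and $z\neq w$, then (i) gives $d(z,w)=d(Tz,Tw)<d(z,w)$, which is absurd.

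The Cauchy step is the expected obstacle. The point is to use (ii) with the gap $\delta$ absorbing the small term $a_n$, and the induction above is the device for this.
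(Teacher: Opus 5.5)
Your proof is correct and complete. The paper gives no proof of this statement; it is quoted from the literature as background. The natural comparison is therefore with the proof of (iii)$\Rightarrow$(i) in the G-complete theorem of Section~5, the only Picard-iteration argument the paper writes out.

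Your first step coincides with that proof: $a_n\downarrow a$, and taking $\varepsilon=a$ in (ii) forces $a=0$. So do your last steps, obtaining the fixed point from $d(Tx,Ty)\le d(x,y)$ and uniqueness from (i).

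The difference is in the middle. In Section~5 the authors only have the consecutive-pair condition, so they stop at the G-Cauchy property and let G-completeness supply the limit. Your induction $d(x_n,x_{j+1})\le a_n+d(x_{n+1},x_{j+1})<\delta/2+\varepsilon$ instead applies (ii) to the non-consecutive pairs $(x_n,x_j)$. This traps the whole tail $\{x_j\}_{j\ge n}$ in the ball of radius $\varepsilon+\delta/2$ about $x_n$, which is exactly what ordinary completeness needs. It is also precisely what breaks in the harmonic-series example of Section~4, where the orbit is asymptotically regular but unbounded.

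Finally, you only use (ii) along a single orbit, and a $\delta$ depending on $x_0$ would suffice. So the same argument also proves the implication (ii)$\Rightarrow$(i) of Theorem~\ref{Bw}.
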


The following theorem, due to Suzuki \cite{S17}, characterizes the weakest fixed point condition for Banach-type mappings on complete metric spaces.
\begin{theorem}\cite{S17} \ 
\label{Bw}
Let $(X,d)$ be a complete metric space and suppose that the mapping $T: X\to X$ satisfies the following.
\begin{align}
\label{CMB}
x\not=y \ \text{implies} \ d(Tx,Ty)<d(x,y).
\tag{CM$_B$}
\end{align}
Then, the following are equivalent.
\begin{enumerate}
\item[(i)] $T$ has a unique fixed point $z\in X$. Moreover, for any $x\in X$, $\{T^nx\}$ converges to $z$.
\item[(ii)] For any $x\in X$ and $\varepsilon>0$, there exists $\delta>0$ such that $d(T^ix,T^jx)<\varepsilon+\delta$ implies $d(T^{i+1}x,T^{j+1}x)\le \varepsilon$ for all $i,j\in\mathbb{N}\cup\{0\}$.
\end{enumerate}
\end{theorem}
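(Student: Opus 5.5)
We prove the two implications separately. The direction (i)$\Rightarrow$(ii) uses the convergence of Picard iterates. The direction (ii)$\Rightarrow$(i) adapts the Ćirić–Jachymski–Matkowski argument behind Theorem \ref{CJMB}, but every contractive estimate is applied only to pairs of points of a single orbit $\{T^nx\}$, which is exactly what (ii) allows.

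\textbf{(i)$\Rightarrow$(ii).} Fix $x$ and $\varepsilon>0$, and put $x_n=T^nx$, so $x_n\to z$. If $\varepsilon$ is large, $\delta$ is found directly. Choose $N$ with $d(x_n,z)<\varepsilon/2$ for all $n\ge N$. For $i,j\ge N$ we then have $d(x_{i+1},x_{j+1})<\varepsilon$, so these pairs impose nothing.

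The remaining pairs have $\min(i,j)<N$. Since $x_n\to z$, for each fixed $i<N$ the distances $d(x_i,x_j)$ converge to $d(x_i,z)$ as $j\to\infty$. Hence the set of values $\{d(x_i,x_j): i<N\}$ accumulates only at the finitely many numbers $d(x_i,z)$. Consider first the case $d(x_i,z)\neq\varepsilon$ for every $i<N$. Take $\delta$ positive and smaller than every gap $d(x_i,x_j)-\varepsilon$ that is positive. Only finitely many such gaps lie below a fixed margin, because the accumulation points are not equal to $\varepsilon$, so $\delta$ can be chosen. With this choice, $d(x_i,x_j)<\varepsilon+\delta$ forces $d(x_i,x_j)\le\varepsilon$. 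Then \eqref{CMB} gives $d(x_{i+1},x_{j+1})\le\varepsilon$ when $x_i\ne x_j$; when $x_i=x_j$ the conclusion is trivial.

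The delicate case is $d(x_i,z)=\varepsilon$ for some $i<N$. Then we only use $d(x_{i+1},x_{j+1})\to d(x_{i+1},z)<d(x_i,z)=\varepsilon$. This holds by \eqref{CMB}, unless $x_i=z$, in which case the orbit is eventually constant. So for all large $j$ the conclusion holds anyway, and only finitely many pairs $(i,j)$ need a strict gap, which can be arranged. This bookkeeping with finitely many exceptional pairs is routine but must be done carefully.

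\textbf{(ii)$\Rightarrow$(i).} Uniqueness of the fixed point follows immediately from \eqref{CMB}. Fix $x$ and set $x_n=T^nx$. If $x_n=x_{n+1}$ for some $n$, then $x_n$ is a fixed point and we are done, so assume $x_n\ne x_{n+1}$ for all $n$.

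The steps are as follows.
\begin{enumerate}
\item The sequence $d(x_n,x_{n+1})$ is strictly decreasing by \eqref{CMB}; let $r$ be its limit. If $r>0$, apply (ii) with $\varepsilon=r$: some $d(x_n,x_{n+1})<r+\delta$, so $d(x_{n+1},x_{n+2})\le r$, which contradicts strict decrease. Hence $r=0$.
\item Show that $\{x_n\}$ is Cauchy by the standard CJM argument. Suppose not. Then there are $\varepsilon>0$ and indices $m_k>n_k\to\infty$ with $d(x_{n_k},x_{m_k})\ge 2\varepsilon$ (or a similar threshold). Take $\delta$ from (ii) for this $\varepsilon$, with $\delta<\varepsilon$, and choose $n$ with $d(x_n,x_{n+1})<\delta/2$. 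By induction on $m$ we show $d(x_n,x_m)<\varepsilon+\delta/2$ for all $m>n$. For the inductive step, if $d(x_n,x_m)<\varepsilon+\delta/2$, then (ii) gives $d(x_{n+1},x_{m+1})\le\varepsilon$, and therefore $d(x_n,x_{m+1})\le\delta/2+\varepsilon$. Getting this to close with a strict inequality needs some care: replace $\delta/2$ by a slightly smaller constant, or use $\le\varepsilon$ on one side. This bounds the tail of the sequence and contradicts the choice of $n_k,m_k$.
\item By completeness, $x_n\to z$. Since \eqref{CMB} makes $T$ continuous (indeed $1$-Lipschitz), $z=Tz$. Uniqueness then identifies $z$ as the same point for every $x$.
\end{enumerate}

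We expect the main obstacle to be the strict-inequality bookkeeping, both in the Cauchy induction and in the boundary case $d(x_i,z)=\varepsilon$ of the first direction.
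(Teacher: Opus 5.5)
The paper gives no proof of this theorem. It is quoted from Suzuki \cite{S17}, so there is no in-paper argument to match. Your proposal is correct in both directions. It is the standard Meir--Keeler/CJM argument restricted to a single orbit, and the points you flagged as delicate are not real obstacles.

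\textbf{Step 2 of (ii)$\Rightarrow$(i).} There is no strictness problem. Since $d(x_n,x_{n+1})<\delta/2$ is strict, the induction closes as written:
\[
d(x_n,x_{m+1})\le d(x_n,x_{n+1})+d(x_{n+1},x_{m+1})<\delta/2+\varepsilon .
\]
Because $d(x_n,x_{n+1})\downarrow 0$, this holds for every $n\ge n_0$. With $\delta<\varepsilon$ you get $d(x_n,x_m)<2\varepsilon$ for all $m>n\ge n_0$, which is Cauchy directly with no contradiction argument. If you keep the contradiction, run the induction at $n=n_k$ for large $k$ rather than at one fixed $n$.

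\textbf{(i)$\Rightarrow$(ii).} Your two cases can be merged using the device of the paper's proof of (i)$\Rightarrow$(iv) in Theorem~\ref{Ke}. Put $E=\{(i,j): d(x_{i+1},x_{j+1})>\varepsilon\}$. On $E$ you have $d(x_i,x_j)>\varepsilon$ by (CM$_B$), and $E$ contains no pair with $i,j\ge N$. Suppose $\inf_{E} d(x_i,x_j)=\varepsilon$. Then there are a fixed $i<N$ and $j_k\to\infty$ with $(i,j_k)\in E$ and $d(x_i,x_{j_k})\to\varepsilon$. Hence $d(x_i,z)=\varepsilon$, so $x_i\ne z$, and
\[
d(x_{i+1},x_{j_k+1})\to d(x_{i+1},z)<\varepsilon ,
\]
which contradicts $(i,j_k)\in E$. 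So $\delta=\inf_E d(x_i,x_j)-\varepsilon$ works, and any $\delta>0$ works if $E=\emptyset$.

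\textbf{Comparison with the theorem of Section~5.} Your Step~1 is exactly the first half of the paper's proof of (iii)$\Rightarrow$(i) there, and your Step~3 matches its last part. The difference lies in your Step~2. The paper has only the one-index condition available, so it obtains only $d(x_n,x_{n+1})\to 0$, i.e.\ a G-Cauchy orbit, and must assume G-completeness. Your Step~2 is precisely where the two-index condition (ii) is indispensable. The harmonic-sum example of Section~4 shows that on a merely complete space it cannot be replaced by the one-index condition.
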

The weakest fixed point condition for Kannan-type mappings on complete metric spaces was established recently by the authors using the CJM framework \cite{HKMS25}.
\begin{theorem}\cite{HKMS25} \ 
\label{CJMK}
Let $(X,d)$ be a complete metric space and suppose that the mapping $T: X\to X$ satisfies the following.
\begin{enumerate}
\item[(i)] $x\not=y$ implies $d(Tx,Ty)<\frac{1}{2}\{d(x,Tx)+d(y,Ty)\}$.
\item[(ii)] For every $\varepsilon>0$, there exists $\delta>0$ such that $\frac{1}{2}\{d(x,Tx)+d(y,Ty)\}<\varepsilon+\delta$ implies $d(Tx,Ty)\le \varepsilon$.
\end{enumerate}
Then, $T$ has a unique fixed point.
\end{theorem}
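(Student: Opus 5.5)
Fix $x\in X$ and put $x_n=T^nx$, $a_n=d(x_n,x_{n+1})$. The plan is the standard CJM scheme: show $a_n\to0$, then that $\{x_n\}$ is Cauchy, then that the limit is a fixed point. Uniqueness is immediate from (i): if $Tz=z$, $Tw=w$ and $z\ne w$, then $d(z,w)=d(Tz,Tw)<\frac12\{0+0\}=0$, which is impossible. We may also assume $x_n\neq x_{n+1}$ for all $n$, since otherwise $x_n$ is already a fixed point.

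\emph{Step 1: $a_n\to0$.} Applying (i) to the pair $x_n,x_{n+1}$ gives $a_{n+1}<\frac12(a_n+a_{n+1})$, hence $a_{n+1}<a_n$. So $a_n\downarrow \varepsilon\ge0$. Suppose $\varepsilon>0$ and take $\delta$ from (ii). For large $n$ we have $\frac12(a_n+a_{n+1})<\varepsilon+\delta$, so (ii) gives $a_{n+1}\le\varepsilon$. This contradicts $a_{n+1}>a_{n+2}\ge\varepsilon$. Hence $a_n\to0$.

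\emph{Step 2: Cauchy.} Here the Kannan structure makes things easier than the Banach case. For any $m,n$, (i) gives
\[
d(x_{m+1},x_{n+1})\le\tfrac12(a_m+a_n).
\]
The right-hand side tends to $0$ as $m,n\to\infty$, so $\{x_n\}$ is Cauchy directly. The CJM-type condition (ii) is not even needed in this step, only in Step 1. By completeness, $x_n\to z$ for some $z\in X$.

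\emph{Step 3: $z$ is fixed.} Since $T$ need not be continuous, we estimate instead of passing to the limit under $T$. Using (i) with the pair $x_n,z$ (or trivially if $x_n=z$),
\[
d(z,Tz)\le d(z,x_{n+1})+d(Tx_n,Tz)\le d(z,x_{n+1})+\tfrac12\bigl(a_n+d(z,Tz)\bigr).
\]
This rearranges to $\frac12 d(z,Tz)\le d(z,x_{n+1})+\frac12a_n\to0$, so $Tz=z$.

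The main obstacle is Step 1, specifically excluding a positive limit $\varepsilon$ of the displacements. This is exactly where (ii) is used, and the argument works because the strict monotonicity of $a_n$ keeps the averages $\frac12(a_n+a_{n+1})$ strictly above $\varepsilon$ while they fall below $\varepsilon+\delta$. The remaining steps are routine consequences of the Kannan inequality.
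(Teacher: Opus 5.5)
Your proof is correct. The paper gives no proof of this theorem (it is quoted from \cite{HKMS25}), so there is no in-paper argument to match exactly. Your three steps mirror the scheme the paper uses in Section 5 for the G-complete Banach case: strict decrease of $a_n$, exclusion of a positive limit by applying the contractive condition to consecutive iterates, then convergence of the orbit and a triangle-inequality estimate for $d(z,Tz)$.

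The real difference from that Banach argument is your Step 2. The Kannan estimate $d(x_{m+1},x_{n+1})\le\frac12(a_m+a_n)$, which is the same inequality used in Case 1 of the proof of Theorem~\ref{Ke}, turns $a_n\to0$ directly into full Cauchy-ness. In the Banach setting, $a_n\to0$ only yields a G-Cauchy sequence. That is why Section 5 needs G-completeness, and why the harmonic-series example of Section 4 works: there the displacements $1/(i+1)$ tend to $0$ but the orbit is not Cauchy.

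Note also that you only apply (ii) to pairs $(x_n,x_{n+1})$ along one fixed orbit. So you have actually proved something stronger: \eqref{CMK}, completeness, and the consecutive-index condition (ii) of Theorem~\ref{Ke} already give a unique fixed point. This is exactly the Kannan-side equivalence the paper stresses and shows to fail for Banach mappings.
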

\begin{theorem}\cite{HKMS25} \ 
\label{Kw}
Let $(X,d)$ be a complete metric space and suppose that the mapping $T: X\to X$ satisfies the following.
\begin{align}
\label{CMK}
x\not=y \ \text{implies} \ d(Tx,Ty)<\frac{1}{2}\{d(x,Tx)+d(y,Ty)\}.
\tag{CM$_K$}
\end{align}
Then, the following are equivalent.
\begin{enumerate}
\item[(i)] $T$ has a unique fixed point $z\in X$. Moreover, for any $x\in X$, $\{T^nx\}$ converges to $z$.
\item[(ii)] For any $x\in X$ and $\varepsilon>0$, there exists $\delta>0$ such that \\  
$\frac{1}{2}\{d(T^ix,T^{i+1}x)+d(T^jx,T^{j+1}x)\}<\varepsilon+\delta$ implies $d(T^{i+1}x,T^{j+1}x)\le \varepsilon$ for all $i,j\in\mathbb{N}\cup\{0\}$.
\end{enumerate}
\end{theorem}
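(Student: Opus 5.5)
We prove the two implications of Theorem \ref{Kw} separately, writing $x_n=T^nx$ and $a_n=d(x_n,x_{n+1})$.

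\emph{(i) $\Rightarrow$ (ii).} Let $z$ be the fixed point and fix $x\in X$ and $\varepsilon>0$. If $x_m=z$ for some $m$, the sequence is eventually constant and only finitely many pairs $(i,j)$ need checking, so a small $\delta$ works. Otherwise all $x_n$ are distinct. Indeed, if $x_n=x_m$ with $n<m$, the orbit is periodic with no fixed point on it, and \eqref{CMK} applied along the cycle gives $a_{k+1}<\frac12(a_k+a_{k+1})$, i.e.\ $a_{k+1}<a_k$. Going around the cycle this is impossible.

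In the distinct case, \eqref{CMK} gives a strictly decreasing sequence $a_n$, and (i) gives $a_n\to0$. Set $s_{ij}=\frac12(a_i+a_j)$. Since
\[
d(x_{i+1},x_{j+1})<s_{ij},
\]
it suffices to find $\delta$ with
\[
s_{ij}<\varepsilon+\delta\ \Longrightarrow\ d(x_{i+1},x_{j+1})\le\varepsilon .
\]
Choose $N$ with $a_N<\varepsilon$. If $i,j\ge N$, then $s_{ij}<\varepsilon$ and we are done. If $\min(i,j)<N$, say $i<N$, then $s_{ij}\ge \frac12 a_{i}\ge \frac12 a_{N}>0$. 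Thus $s_{ij}$ is bounded below, but this alone is not enough. The difficulty is the pairs with $i<N$ fixed and $j\to\infty$: there $s_{ij}\to \frac12 a_i$ and $d(x_{i+1},x_{j+1})\to d(x_{i+1},z)$.

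So the key estimate to establish is
\[
d(x_{i+1},z)<\tfrac12 a_i ,
\]
which is exactly \eqref{CMK} applied to the pair $(x_i,z)$, using $Tz=z$. With it, for each of the finitely many $i<N$ the quantity
\[
\limsup_{j}\Bigl(d(x_{i+1},x_{j+1})-s_{ij}\Bigr)
\]
is strictly negative. Hence there is $J$ such that $d(x_{i+1},x_{j+1})<s_{ij}-\eta$ for all $i<N$ and $j\ge J$, for some $\eta>0$.

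The remaining finitely many pairs $(i,j)$ with $i,j<\max(N,J)$ are handled by a direct choice. For each such pair, either $d(x_{i+1},x_{j+1})\le\varepsilon$ already, or $d(x_{i+1},x_{j+1})>\varepsilon$, in which case $s_{ij}>\varepsilon$ and we take $\delta<s_{ij}-\varepsilon$. For the tail pairs $i<N$, $j\ge J$: whenever $s_{ij}<\varepsilon+\delta$ with $\delta\le\eta$, we get $d(x_{i+1},x_{j+1})<s_{ij}-\eta<\varepsilon$. Taking $\delta$ to be the minimum of these finitely many positive constraints finishes this direction. I expect this uniformity over the tail pairs to be the main obstacle.

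\emph{(ii) $\Rightarrow$ (i).} We run the CJM argument along the orbit. If some $x_m=x_{m+1}$, we have a fixed point. Otherwise, as above, $a_n$ is strictly decreasing with limit $r\ge0$. If $r>0$, take $\varepsilon=r$: for large $n$, $s_{n,n+1}<r+\delta$, so (ii) gives $a_{n+1}\le r$. Combined with strict decrease, this gives $a_{n+2}<r$, a contradiction; hence $r=0$.

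For Cauchyness, suppose $\varepsilon>0$ and choose $\delta$ from (ii), with $\delta<\varepsilon$ and $N$ such that $a_n<\delta/2$ for $n\ge N$. Then for all $i,j\ge N$,
\[
s_{ij}<\delta<\varepsilon+\delta ,
\]
so (ii) yields $d(x_{i+1},x_{j+1})\le\varepsilon$. Thus $\{x_n\}$ is Cauchy. Completeness gives a limit $z$.

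To see that $z$ is a fixed point, apply \eqref{CMK}:
\[
d(x_{n+1},Tz)<\tfrac12\bigl(a_n+d(z,Tz)\bigr).
\]
Letting $n\to\infty$ gives $d(z,Tz)\le\frac12 d(z,Tz)$, so $Tz=z$. Uniqueness holds because two distinct fixed points $z,w$ would give $d(z,w)<0$ by \eqref{CMK}. Since $x$ was arbitrary, every orbit converges to this unique $z$, which is (i).
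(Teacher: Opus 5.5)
Your proof is correct. The paper does not reprove Theorem~\ref{Kw}, which it quotes from \cite{HKMS25}. Its direct proof of (i)$\Rightarrow$(iv) in Theorem~\ref{Ke} is the in-paper counterpart of your (i)$\Rightarrow$(ii), and the skeleton is the same:
\begin{itemize}
\item tail pairs are handled by \eqref{CMK} and the monotonicity of $d(T^nx,T^{n+1}x)$;
\item the finite block is handled by a minimum of strictly positive gaps;
\item the mixed pairs ($i$ small, $j\to\infty$) are handled by exactly your key estimate $d(T^{i+1}x,z)<\frac12 d(T^ix,T^{i+1}x)$.
\end{itemize}

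The differences are technical. The paper first reduces to $\varepsilon=d(T^kx,T^{k+1}x)$, invoking Proposition 5.1 of \cite{HKMS25} for all other $\varepsilon$, and uses $k$ as the cutoff. It then splits the mixed case by comparing $\frac12 d(T^ix,T^{i+1}x)$ with $\varepsilon$: if it is $\le\varepsilon$ there are only finitely many bad $j$, and if it is $>\varepsilon$ the hypothesis fails uniformly in $j$. You instead treat every $\varepsilon>0$ at once. You take a cutoff $N$ with $a_N<\varepsilon$ and a single margin $\eta$ coming from $\lim_{j}\bigl(d(x_{i+1},x_{j+1})-s_{ij}\bigr)=d(x_{i+1},z)-\frac12 a_i<0$. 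This absorbs both subcases and makes your argument self-contained.

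Your (ii)$\Rightarrow$(i) is the standard CJM argument, parallel to the paper's Section 5 proof for the Banach case. Two things are worth noticing there:
\begin{itemize}
\item Your contradiction for $r>0$ uses only the pairs $(n,n+1)$.
\item Once $a_n\to0$, you do not need (ii) for Cauchyness, since \eqref{CMK} already gives $d(x_{i+1},x_{j+1})\le\frac12(a_i+a_j)$.
\end{itemize}
So your argument actually shows that the consecutive-index condition (ii) of Theorem~\ref{Ke} suffices on a complete space. This is exactly where the Banach case breaks down: there $a_n\to0$ only yields a G-Cauchy sequence, as the harmonic-series example of Section 4 shows.
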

To clarify the relationships between the above theorems, we present two examples. The first example demonstrates that the weakest conditions for Banach-type and Kannan-type mappings are independent; neither condition implies the other.
\begin{ex}
\begin{enumerate}
\item[(a)] \textbf{Banach weakest but not Kannan weakest.} \\
Let $X=[0,1]$ with the usual metric $d(x,y)=|x-y|$. Define $T:X \to X$ by $Tx=\frac{x}{2}$. Clearly, $T$ is a Banach contraction so satisfies the weakest condition for a Banach-type. However, $T$ does not satisfy \eqref{CMK}. Indeed, for $x=1$ and $y=0$, we have $d(T1,T0)=\frac{1}{2}$, while $\frac{1}{2}\{d(1,T1)+d(0,T0)\}=\frac{1}{4}$. Thus, \eqref{CMK} does not hold.
\item[(b)] \textbf{Kannan weakest but not Banach weakest.} \\
Let $X = [0,1]$ with the usual metric. Define $T: X \to X$ by
\[ Tx=
\begin{cases}
\frac{x}{4} & \text{if } x \in [0,\frac{1}{2}], \\
\frac{x}{5} & \text{if } x \in (\frac{1}{2},1].
\end{cases}
\]
We consider $x=0.5$ and $y=0.51$. We have $d(x,y)=0.01$, but $d(Tx,Ty)=|0.125-0.102|=0.023>d(x,y)$. Thus, $T$ does not satisfy \eqref{CMB}. 
On the other hand, it can be verified that $T$ satisfies the original Kannan condition with the constant $\alpha=\frac{1}{3}$, that is,
\[ d(Tx,Ty) \le \frac{1}{3}\{d(x,Tx)+d(y,Ty)\} \quad \text{for all} \ x, y \in X. \]
So, $T$ satisfies the weakest condition for a Kannan-type.
\end{enumerate}
\end{ex}
Suzuki \cite[Example 8]{S17} showed that in the Banach-type setting, the weakest condition is strictly weaker than the CJM condition. 
The next example shows that the same phenomenon occurs in the Kannan-type setting.
\begin{ex}
Let $\ell_1=\{a=(a_n)_{n\in\mathbb{N}} : \sum_{n\in\mathbb{N}}|a_n|<\infty\}$ equipped with the standard norm $\|a\|_1=\sum_{n\in\mathbb{N}}|a_n|$, and let $(e_n)_{n=1}^\infty$ be its standard basis. We define the sequences $x_n=(3+\frac{4}{n})e_n$ and $u_n=(1+\frac{1}{n})e_n$ for $n \in \mathbb{N}$. Consider the closed subset $X$ of $\ell_1$ by
\[ X=\{x_n : n \ge 1\} \cup \{u_n : n \ge 1\} \cup \{0\}. \] 
And we define a mapping $T: X \to X$ by 
\[ Tx_n=u_n, \ Tu_n=0, \ \text{and} \ T0=0. \]

First, we show that $T$ satisfies \eqref{CMK}. The most critical case is for $x_n$ and $x_m$ with $n \neq m$. We have:
\begin{align*}
d(Tx_n, Tx_m)&=d(u_n, u_m)=2+\frac{1}{n}+\frac{1}{m}, \\
d(x_n, Tx_n)&=d(x_n, u_n)=2+\frac{3}{n}, \\
d(x_m, Tx_m)&=d(x_m, u_m)=2+\frac{3}{m}.
\end{align*}
It follows that $d(Tx_n, Tx_m)<\frac{1}{2}\{d(x_n, Tx_n)+d(x_m, Tx_m)\}$. 
Strict inequalities also hold for other pairs, so $T$ satisfies \eqref{CMK}. 

Next, we verify that $T$ satisfies the condition (ii) of Theorem \ref{Kw}. Fix $x \in X$ and $\varepsilon>0$. Notice that $T^kx=0$ for all $k\ge 2$, which implies $d(T^kx, T^{k+1}x)=0$ for any $k\ge 2$. When evaluating the implication in condition (ii) for indices $i,j \in \mathbb{N} \cup \{0\}$, we can classify the pairs $(i,j)$ into the following cases:
\begin{enumerate}
\item[(I)] If $i, j \ge 2$, then $d(T^{i+1}x, T^{j+1}x)=d(0,0)=0 \le \varepsilon$ holds for any $\delta>0$.
\item[(II)] If $i \in \{0,1\}$ and $j \ge 2$, then $T^jx=T^{j+1}x=0$. The implication reduces to:
\[ \frac{1}{2}d(T^ix, T^{i+1}x)<\varepsilon+\delta \implies d(T^{i+1}x, 0) \le \varepsilon. \]
Since $T$ satisfies \eqref{CMK} and $T^i x \neq 0$ (unless $x=0$), we have the strict inequality $d(T^{i+1} x, 0)<\frac{1}{2}d(T^ix, T^{i+1}x)$. Thus, the gap 
\[ \delta_1=\min_{i\in \{0,1\}}\left(\frac{1}{2}d(T^i x, T^{i+1}x)-d(T^{i+1}x, 0)\right), \]
is strictly positive.
\item[(III)] If $i,j \in \{0,1\}$, by the same reasoning using \eqref{CMK}, we can define strictly positive gaps 
\[ \delta_2=\min_{i,j\in \{0,1\}}\left(\frac{1}{2}\{d(T^ix, T^{i+1}x)+d(T^jx, T^{j+1}x)\}-d(T^{i+1}x, T^{j+1}x)\right)>0. \]
\end{enumerate}
So, we can choose $\delta>0$ such that $\delta \le \min \{\delta_1, \delta_2\}$. Therefore, condition (ii) of Theorem \ref{Kw} is satisfied for each $x \in X$.

However, $T$ does not satisfy condition (ii) of Theorem \ref{CJMK}. Let $\varepsilon=2$. For any given $\delta>0$, we can choose sufficiently large integers $n, m$ such that $\frac{3}{2n}+\frac{3}{2m}<\delta$. Then, we have
\[ \frac{1}{2}\{d(x_n, Tx_n)+d(x_m, Tx_m)\}=2+\frac{3}{2n}+\frac{3}{2m}<\varepsilon+\delta. \]
But we obtain
\[ d(Tx_n, Tx_m)=2+\frac{1}{n}+\frac{1}{m}>2=\varepsilon. \]
Therefore, $T$ is the weakest mapping for a Kannan-type but is not the CJM mapping for a Kannan-type.
\end{ex}
Theorem \ref{Bw} and Theorem \ref{Kw} provide benchmark weakest conditions for Banach and Kannan mappings, respectively. In the subsequent sections, we investigate to what extent convergence-based conditions similar to (ii) above are equivalent in each setting, and how this equivalence depends on the underlying assumptions.

\section{Kannan case}

We recall an equivalence condition established in \cite{HKMS25}.
Under condition \eqref{CMK}, the sequence $(d(T^nx,T^{n+1}x))_{n\in\mathbb{N}}$ is strictly decreasing, and hence its nonnegative limit exists.
Let us denote this limit by $\alpha\ge0$.
\begin{theorem}
\label{Ke}
Assume the \eqref{CMK} condition of Theorem \ref{Kw}. 
Let $\alpha$ be the limit of the sequence $(d(T^nx,T^{n+1}x))_{n\in\mathbb{N}}$. Then the following statements are equivalent:
\begin{enumerate}
\item[(i)] $\alpha=0.$
\item[(ii)] For any $x\in X$ and $\varepsilon>0$, there exists $\delta>0$ such that 

\noindent
$\frac{1}{2}\{ d(T^ix,T^{i+1}x)+d(T^{i+1}x,T^{i+2}x)\}<\varepsilon+\delta$ implies $d(T^{i+1}x,T^{i+2}x)\le \varepsilon$ for all $i\in\mathbb{N}\cup\{0\}$.
\item[(iii)] For any $x\in X$ and $\varepsilon\in (0,\infty)\backslash \{d(T^kx,T^{k+1}x) : k=1,2,\cdots\}$, there exists $\delta>0$ such that 

\noindent
$\frac{1}{2}\{ d(T^ix,T^{i+1}x)+d(T^{j}x,T^{j+1}x)\}<\varepsilon+\delta$ implies $d(T^{i+1}x,T^{j+1}x)\le \varepsilon$ for all $i,j\in\mathbb{N}\cup\{0\}$.
\end{enumerate}
Moreover, if $T$ has a fixed point $z$, then statement (iv) is also equivalent to (i)-(iii).
\begin{enumerate}
\item[(iv)] For any $x\in X$ and $\varepsilon>0$, there exists $\delta>0$ such that 

\noindent
$\frac{1}{2}\{ d(T^ix,T^{i+1}x)+d(T^{j}x,T^{j+1}x)\}<\varepsilon+\delta$ implies $d(T^{i+1}x,T^{j+1}x)\le \varepsilon$ for all $i,j\in\mathbb{N}\cup\{0\}$.
\end{enumerate}
\end{theorem}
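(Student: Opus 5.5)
Throughout, fix $x\in X$ and write $d_n=d(T^nx,T^{n+1}x)$. If $T^nx=T^{n+1}x$ for some $n$, then all later $d_m$ vanish, $\alpha=0$, and every statement is trivial because the Picard orbit is eventually constant. So I assume $T^nx\neq T^{n+1}x$ for all $n$. Then \eqref{CMK} applied to the pair $T^nx,T^{n+1}x$ gives $d_{n+1}<\tfrac12(d_n+d_{n+1})$, i.e. $d_{n+1}<d_n$, so $d_n\downarrow\alpha$ with $d_n>\alpha$ for every $n$. I plan to prove (i)$\Rightarrow$(ii)$\Rightarrow$(i), then (i)$\Rightarrow$(iii)$\Rightarrow$(i), and finally handle (iv).

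\emph{(ii)$\Rightarrow$(i) and (iii)$\Rightarrow$(i).} Suppose $\alpha>0$ and take $\varepsilon=\alpha$. This $\varepsilon$ is admissible in (iii), since every $d_k$ is strictly larger than $\alpha$. In (iii) take $j=i+1$, so that both (ii) and (iii) yield the same implication. For any $\delta>0$ we have $\tfrac12(d_i+d_{i+1})\to\alpha<\alpha+\delta$, so the hypothesis holds for all large $i$. However, $d(T^{i+1}x,T^{i+2}x)=d_{i+1}>\alpha=\varepsilon$, which is a contradiction.

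\emph{(i)$\Rightarrow$(ii).} Fix $\varepsilon>0$. Since $d_n\to0$, only finitely many $i$ satisfy $d_{i+1}>\varepsilon$. For each such $i$ we have $\tfrac12(d_i+d_{i+1})>d_{i+1}>\varepsilon$. I will take $\delta$ to be the minimum of the finitely many positive gaps $\tfrac12(d_i+d_{i+1})-\varepsilon$.

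\emph{(i)$\Rightarrow$(iii) (and in fact (iv)).} This is the main step. Fix $\varepsilon$ and choose $N$ with $d_N<\varepsilon$. I split the pairs $(i,j)$, assumed symmetric with $i\le j$, into three classes.

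\begin{enumerate}
\item[(a)] If $i,j\ge N$, then \eqref{CMK} applied to $T^ix,T^jx$ gives $d(T^{i+1}x,T^{j+1}x)<\tfrac12(d_i+d_j)\le d_N<\varepsilon$. So no restriction on $\delta$ is needed.
\item[(b)] If $i,j<N$, there are finitely many pairs. Whenever $d(T^{i+1}x,T^{j+1}x)>\varepsilon$, \eqref{CMK} gives $\tfrac12(d_i+d_j)>\varepsilon$, and each such pair contributes a positive gap.
\item[(c)] If $i<N\le j$, there are infinitely many $j$, and this is the delicate case. \eqref{CMK} applied to $T^mx,T^nx$ gives $d(T^{m+1}x,T^{n+1}x)<\tfrac12(d_m+d_n)\to0$, so $(T^nx)$ is Cauchy; no completeness is needed for this. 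Hence $L_i=\lim_j d(T^{i+1}x,T^{j+1}x)$ exists, and it also equals $\lim_j d(T^{i+1}x,T^{j+2}x)$. Applying \eqref{CMK} to $T^{i+1}x,T^{j+1}x$ and letting $j\to\infty$ gives
\[
L_i\le \tfrac12 d_{i+1}<\tfrac12 d_i .
\]
\end{enumerate}

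In case (c), if $\tfrac12 d_i<\varepsilon$ then $\tfrac12(d_i+d_j)\ge\tfrac12 d_i$ leaves a uniform positive gap below $\varepsilon$. I expect the borderline situation $\tfrac12 d_i\ge\varepsilon$ to be the main obstacle.

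\begin{enumerate}
\item[(c1)] If $\tfrac12 d_i>\varepsilon$, then $\tfrac12(d_i+d_j)\ge\tfrac12 d_i$ exceeds $\varepsilon$ by a uniform positive amount, so $\delta$ can be taken below that amount.
\item[(c2)] If $\tfrac12 d_i=\varepsilon$, then $L_i<\varepsilon$, so $d(T^{i+1}x,T^{j+1}x)\le\varepsilon$ for all $j$ beyond some $J_i$. The finitely many remaining $j$ are absorbed as in (b).
\end{enumerate}

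Since $i<N$ ranges over a finite set, the minimum of all these positive gaps is a valid $\delta$. The argument never uses $\varepsilon\notin\{d_k\}$, and in the nondegenerate case it seems to give (iv) directly.

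\emph{The statement (iv).} The implication (iv)$\Rightarrow$(iii) is trivial. I would then recheck whether the exclusion $\varepsilon\notin\{d_k\}$ is needed somewhere, for instance at the boundary in case (c2) or when the orbit hits a fixed point. Where a fixed point $z$ is assumed, I would use it to identify $L_i=d(T^{i+1}x,z)$. Since $Tz=z$, \eqref{CMK} applied to $T^ix$ and $z$ gives $d(T^{i+1}x,z)<\tfrac12 d_i$ whenever $T^ix\neq z$. This gives the strict inequality cleanly and yields (i)$\Rightarrow$(iv).
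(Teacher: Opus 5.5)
There is a genuine gap in case (c2) of your proof of (i)$\Rightarrow$(iii), and it sits exactly where the paper needs the fixed point. The bound $L_i\le\frac12 d_{i+1}$ comes from an index slip. Applying \eqref{CMK} to $T^{i+1}x,T^{j+1}x$ controls $d(T^{i+2}x,T^{j+2}x)$, whose limit is $L_{i+1}$. So what you actually get is $L_{i+1}\le\frac12 d_{i+1}$, which is just the shifted form of the non-strict bound $L_i\le\frac12 d_i$. The claimed bound in fact points the wrong way: letting $j\to\infty$ in $d_{i+1}\le d(T^{i+1}x,T^{j+1}x)+d(T^{j+1}x,T^{i+2}x)$ gives $d_{i+1}\le L_i+L_{i+1}\le L_i+\frac12 d_{i+1}$, hence $L_i\ge\frac12 d_{i+1}$. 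With only $L_i\le\frac12 d_i$ available, the borderline case $\frac12 d_i=\varepsilon$ stays open, and the exclusion $\varepsilon\notin\{d_k\}$ does not remove it. A smaller point: when $\frac12 d_i<\varepsilon$, the lower bound $\frac12(d_i+d_j)\ge\frac12 d_i$ is beside the point. The correct reason is $d(T^{i+1}x,T^{j+1}x)\le\frac12(d_i+d_j)\to\frac12 d_i<\varepsilon$, so only finitely many $j$ are bad.

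This cannot be repaired using only the Cauchy property of the orbit. Let $X=\{x_0,x_1,\dots\}$ with $d(x_m,x_n)=r_m+r_n$ for $m\ne n$, where $r_0=r_1=1$ and $r_n=2^{-n}$ for $n\ge2$, and set $Tx_n=x_{n+1}$. Then \eqref{CMK} reduces to $r_{m+1}+r_{n+1}<r_m+r_n$, which holds for $m\ne n$ because $r_n$ strictly decreases for $n\ge1$. Also $d_n=r_n+r_{n+1}\to0$, and $T$ has no fixed point. Take $x=x_0$ and $\varepsilon=1=\frac12 d_0$, which is not in $\{d_k:k\ge1\}=\{\frac54,\frac38,\frac3{16},\dots\}$. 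Then $\frac12(d_0+d_j)=1+3\cdot2^{-j-2}\to1$, but $d(x_1,x_{j+1})=1+2^{-j-1}>1$ for $j\ge2$. Here $L_0=\frac12 d_0$ and (iii) fails. Your remark that the exclusion is never used and that (iv) comes for free is a symptom of this error.

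So (i)$\Rightarrow$(iii) genuinely needs extra input, such as the completeness in Theorem \ref{Kw}. The paper does not prove it here; it cites Proposition 5.1 of \cite{HKMS25}. In the paper's direct proof of (i)$\Rightarrow$(iv), the fixed point is used precisely at your step (c2). First, $d(T^{\ell+1}x,z)<\frac12 d_\ell\to0$ gives $T^nx\to z$. Then \eqref{CMK} applied to $T^ix\ne z$ and $z$ gives $L_i=d(T^{i+1}x,z)<\frac12 d_i$ (Subcase 3.1). Your last paragraph is this argument. Combined with your cases (a), (b), (c1), it correctly yields (i)$\Rightarrow$(iv) for every $\varepsilon>0$ when $z$ exists, so your version does not need the paper's preliminary reduction to $\varepsilon=d_k$. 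To get (i)$\Rightarrow$(iii) in general, you must first produce a fixed point. For example, in a complete space $T^nx\to z$ and $\frac12 d(z,Tz)\le d(z,T^{n+1}x)+\frac12 d_n\to0$. Your arguments for (i)$\Rightarrow$(ii), (ii)$\Rightarrow$(i) and (iii)$\Rightarrow$(i) are correct.
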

In \cite{HKMS25}, Theorem \ref{Ke} was proved via Theorem \ref{Kw} under the assumption of completeness. Here, we provide a direct proof that is achieved without assuming the completeness or the convergence of Picard sequences; it deduces the equivalence only from the existence of fixed points.
\begin{proof}[Proof of Theorem \ref{Ke}]
The equivalence (i)$\Leftrightarrow$(ii)$\Leftrightarrow$(iii) is proved directly in Proposition 5.1 of \cite{HKMS25}. The implication (iv)$\Rightarrow$(iii) is obvious. Hence, it suffices to show (i)$\Rightarrow$(iv).

By the equivalence (i)$\Leftrightarrow$(iii), for any $k\in\mathbb{N}\cup\{0\}$ it is enough to verify (iv) with
\[ \varepsilon=d(T^kx,T^{k+1}x)>0. \]

By taking the contrapositive of (iv), it suffices to show the following.
\begin{align}
\label{cl}
\inf_{(i,j)\in E}\frac{1}{2}\{d(T^ix, T^{i+1}x)+d(T^jx, T^{j+1}x)\}>\varepsilon,
\end{align}
where $E:=\{(i,j): d(T^{i+1}x, T^{j+1}x)>\varepsilon\}$.

\noindent
Case 1: $i,j\ge k$. By (i) and \eqref{CMK}, the sequence $(d(T^nx,T^{n+1}x))_{n\in\mathbb{N}}$ is strictly decreasing. Then, for any $i,j\ge k$, we have
\begin{align*}
d(T^{i+1}x,T^{j+1}x)&\le \frac{1}{2}\{d(T^ix,T^{i+1}x)+d(T^jx,T^{j+1}x)\} \\
&\le d(T^kx,T^{k+1}x)=\varepsilon.
\end{align*}
Here we have used the fact that, by \eqref{CMK}, for any $x,y\in X$,
\[ d(Tx,Ty)\le \frac{1}{2}\{d(x,Tx)+d(y,Ty)\}. \]
Therefore, $E$ is empty, so \eqref{cl} holds.

\noindent
Case 2: $i,j\le k$. For any $(i,j)\in E$, by \eqref{CMK}, we have
\[ \frac{1}{2}\{d(T^ix,T^{i+1}x)+d(T^jx,T^{j+1}x)\}\ge d(T^{i+1}x,T^{j+1}x)>\varepsilon. \]
Then, since $E$ is a finite set, we obtain
\[ \min_{(i,j)\in E}\frac{1}{2}\{d(T^ix,T^{i+1}x)+d(T^jx,T^{j+1}x)\}>\varepsilon. \]
So \eqref{cl} holds.

\noindent
Case 3: $i<k<j$. 
If $T^ix=z$, then for any $j\ge i$, $T^jx=z$ holds, so (iv) is satisfied. Therefore, we assume $T^ix\not=z$. In this case, from \eqref{CMK}, we have
\begin{align}
\label{32}
d(T^{i+1}x,z)<\frac{1}{2}d(T^ix, T^{i+1}x). 
\end{align}
We distinguish two subcases:

\noindent
Subcase 3.1: $\frac{1}{2}d(T^ix,T^{i+1}x)\le \varepsilon$. 

First, note that using the \eqref{CMK} condition with the fixed point $z$ and assumption (i), we have $d(T^{\ell+1}x,z)<\frac{1}{2}d(T^{\ell}x,T^{\ell+1}x)$ for all $\ell\in\mathbb{N}$, which implies that $\lim_{\ell\to \infty}d(T^{\ell+1}x,z)=0$.
Furthermore, from \eqref{32} and the condition of this subcase, we obtain a strict inequality:
\[ \lim_{j\to\infty}d(T^{i+1}x,T^{j+1}x)=d(T^{i+1}x,z)<\frac{1}{2}d(T^ix,T^{i+1}x)\le \varepsilon. \]
Therefore, it follows that $d(T^{i+1}x,T^{j+1}x)\le \varepsilon$ for all sufficiently large $j$.
Because $k$ is fixed and $i<k$, there are only finitely many choices for $i$. For each such $i$, the inequality $d(T^{i+1}x,T^{j+1}x)>\varepsilon$ can hold for at most finitely many $j$. This implies that the set of pairs $(i,j)\in E$ in this subcase is finite. Hence, analogous to Case 2, the condition \eqref{cl} is satisfied.

\noindent
Subcase 3.2: $\frac{1}{2}d(T^ix,T^{i+1}x)>\varepsilon$. 

In this case, we have
\[ \frac{1}{2}\{d(T^ix, T^{i+1}x)+d(T^jx, T^{j+1}x)\}\ge \frac{1}{2}d(T^ix, T^{i+1}x)>\varepsilon. \]
Thus, we choose $\delta_1>0$ such that
\[ \min_{\substack{i<k, \\ \frac{1}{2}d(T^ix,T^{i+1}x)>\varepsilon}}\frac{1}{2}d(T^ix, T^{i+1}x)\ge \varepsilon+\delta_1>\varepsilon, \]
the assumption cannot be satisfied, and therefore, condition (iv) is verified.

Combining these results, since \eqref{cl} holds in all cases, there exists $\delta_2>0$ such that
\[ \inf_{(i,j)\in E}\frac{1}{2}\{d(T^ix, T^{i+1}x)+d(T^jx, T^{j+1}x)\}\ge \varepsilon+\delta_2>\varepsilon. \]
By setting $\delta=\min\{\delta_1,\delta_2\}$, we have shown (iv).
\end{proof}

\section{Banach case}

Next, we consider the case of Banach contraction mappings.

Let $(X,d)$ be a complete metric space. Assume that $T:X\to X$ satisfies the \eqref{CMB} condition of Theorem \ref{Bw}.
Then we investigate whether the following two conditions are equivalent:
\begin{enumerate}
\item[(B1)] For any $x\in X$ and $\varepsilon>0$, there exists $\delta>0$ such that $d(T^ix,T^jx)<\varepsilon+\delta$ implies $d(T^{i+1}x,T^{j+1}x)\le \varepsilon$ for all $i,j\in\mathbb{N}\cup\{0\}$.
\item[(B2)] For any $x\in X$ and $\varepsilon>0$, there exists $\delta>0$ such that $d(T^ix,T^{i+1}x)<\varepsilon+\delta$ implies $d(T^{i+1}x,T^{i+2}x)\le \varepsilon$ for all $i\in\mathbb{N}\cup\{0\}$.
\end{enumerate}

For Kannan mappings, the corresponding conditions (ii) and (iv) in Theorem \ref{Ke} are equivalent. However, in the Banach case, this equivalence (namely, (B2) $\Rightarrow$ (B1)) does not hold in general. We present the following example.
\begin{ex}
Let $\displaystyle a_n=\sum_{j=1}^n\frac{1}{j}, \ a_0=0$. 
We define $X=\{a_n: n\in\mathbb{N}\cup\{0\}\}\subset \mathbb{R}$, and let $d$ denote the usual metric on $\mathbb{R}$.

Then, $(X,d)$ is complete. Here, we define $T:X\to X$ by $Ta_n=a_{n+1}, \ n\in\mathbb{N}\cup\{0\}$.
Therefore, we can show that $T$ satisfies the \eqref{CMB} condition of Theorem \ref{Bw} and (B2), but does not satisfy (B1).

(\eqref{CMB}) For any $x,y\in X$ with $x\not=y$, we can express $x=T^ma_0$ and $y=T^na_0$ with $m\not=n$. Assume $m>n$. Then, we have
\begin{align*}
d(Tx,Ty)&=d(T^{m+1}a_0, T^{n+1}a_0)=d\left( \sum_{j=1}^{m+1}\frac{1}{j}, \sum_{j=1}^{n+1}\frac{1}{j}\right)=\sum_{j=n+2}^{m+1}\frac{1}{j} \\
&=\sum_{j=n+1}^m\frac{1}{j}+\frac{1}{m+1}-\frac{1}{n+1}<\sum_{j=n+1}^{m}\frac{1}{j}=d(a_n,a_m)=d(x,y).
\end{align*}
Therefore, \eqref{CMB} holds.

((B1) is not satisfied) $T$ has no fixed point by definition. Therefore, by Theorem \ref{Bw}, (B1) does not hold.

((B2)) Without loss of generality, let $x=a_0$. First, we note that
\[ d(T^ia_0,T^{i+1}a_0)=\frac{1}{i+1}, \quad d(T^{i+1}a_0,T^{i+2}a_0)=\frac{1}{i+2}. \]
Therefore, if $\varepsilon\ge 1$, (B2) holds obviously.
For any $\varepsilon\in(0,1)$, there exists $I\in \mathbb{N}\cup\{0\}$ such that $\frac{1}{I+1}\ge \varepsilon$ and $\frac{1}{I+2}<\varepsilon$.
Then, we choose $\delta>0$ such that $\varepsilon\le \frac{1}{I+1}<\varepsilon+\delta<\frac{1}{I}$ where we set $\frac{1}{0}=\infty$.
Then, for any $i\in\mathbb{N}\cup\{0\}$, if $i\ge I$, then $\frac{1}{i+2}<\varepsilon$, so (B2) holds. Moreover, if $i<I$, then $\varepsilon+\delta\le \frac{1}{I}\le \frac{1}{i+1}$, so the assumption of (B2) is not satisfied, so (B2) holds.
\end{ex}

\section{Banach case on G-complete space}

In this section, we consider under what assumptions the two conditions in Section 3 become equivalent.
First, we introduce G-completeness, introduced in \cite{G88}, which is often used in fixed point theory on fuzzy metric spaces.
\begin{defi}
Let $(X,d)$ be a metric space and $\{x_n\}$ be a sequence in $X$.
\begin{enumerate}
\item The sequence $\{x_n\}$ is said to be a G-Cauchy sequence if $\displaystyle \lim_{n\to\infty}d(x_n,x_{n+p})=0$ for $p\in\mathbb{N}$.
\item A metric space in which every G-Cauchy sequence is convergent is called a G-complete metric space.
\end{enumerate}
\end{defi}
Using this definition, we can prove the following theorem.
\begin{theorem}
Let $(X,d)$ be a G-complete metric space and suppose that the mapping $T: X\to X$ satisfies the \eqref{CMB} condition of Theorem \ref{Bw}.
Then, the following are equivalent.
\begin{enumerate}
\item[(i)] $T$ has a unique fixed point $z\in X$. Moreover, for any $x\in X$, $\{T^nx\}$ converges to $z$.
\item[(ii)] For any $x\in X$ and $\varepsilon>0$, there exists $\delta>0$ such that $d(T^ix,T^jx)<\varepsilon+\delta$ implies $d(T^{i+1}x,T^{j+1}x)\le \varepsilon$ for all $i,j\in\mathbb{N}\cup\{0\}$.
\item[(iii)] For any $x\in X$ and $\varepsilon>0$, there exists $\delta>0$ such that $d(T^ix,T^{i+1}x)<\varepsilon+\delta$ implies $d(T^{i+1}x,T^{i+2}x)\le \varepsilon$ for all $i\in\mathbb{N}\cup\{0\}$.
\end{enumerate}
\end{theorem}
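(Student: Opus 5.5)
The proof will follow the cycle (i)$\Rightarrow$(ii)$\Rightarrow$(iii)$\Rightarrow$(i). For (i)$\Rightarrow$(ii), a G-complete space is complete, since every Cauchy sequence is G-Cauchy. Theorem \ref{Bw} therefore applies and gives (i)$\Rightarrow$(ii) directly. The implication (ii)$\Rightarrow$(iii) is immediate: take $j=i+1$ in (ii).

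The substantive step is (iii)$\Rightarrow$(i). Fix $x\in X$ and put $x_n=T^nx$ and $r_n=d(x_n,x_{n+1})$. If some $x_n=x_{n+1}$, then $x_n$ is a fixed point. Otherwise \eqref{CMB} makes $(r_n)$ strictly decreasing, so $r_n\downarrow r\ge 0$. I will show $r=0$ by contradiction using (iii). Suppose $r>0$, take $\varepsilon=r$, and let $\delta$ be as in (iii). For $n$ large we have $r_n<r+\delta$, so (iii) gives $r_{n+1}\le r$. But $r_{n+1}>r$ by strict monotonicity, which is a contradiction. Hence $r_n\to 0$.

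Next I show the sequence is G-Cauchy. Fix $p\in\mathbb{N}$. Then
\[ d(x_n,x_{n+p})\le r_n+r_{n+1}+\cdots+r_{n+p-1}\le p\,r_n\to 0. \]
So $\{x_n\}$ is G-Cauchy, and G-completeness gives a limit $z$. This is exactly where G-completeness replaces the harder Cauchy argument in Suzuki's theorem, and it explains why the harmonic-series example of Section 4 fails: that sequence is G-Cauchy but not convergent.

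Then \eqref{CMB} gives $d(Tx_n,Tz)\le d(x_n,z)\to 0$, so $x_{n+1}\to Tz$, and by uniqueness of limits $Tz=z$. For uniqueness of the fixed point, two distinct fixed points $z,w$ would give $d(z,w)=d(Tz,Tw)<d(z,w)$, which is impossible. Every Picard sequence converges to a fixed point, so every Picard sequence converges to the same $z$, which proves (i). There is no real obstacle beyond noticing that (iii) forces $r=0$; the only point needing care is the degenerate case where the orbit reaches a fixed point in finitely many steps, which is handled separately above.
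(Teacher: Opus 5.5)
Your proof is correct and follows essentially the same route as the paper: (i)$\Rightarrow$(ii) via Theorem \ref{Bw}, (ii)$\Rightarrow$(iii) by taking $j=i+1$, and for (iii)$\Rightarrow$(i) the same three steps — the strictly decreasing limit of $d(T^nx,T^{n+1}x)$ must vanish, the orbit is therefore G-Cauchy and converges by G-completeness, and \eqref{CMB} gives the fixed point and its uniqueness. Your remark that G-completeness implies completeness, which is what makes Theorem \ref{Bw} applicable, fills in a step the paper leaves implicit.
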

\begin{proof}
(i)$\Rightarrow$(ii) follows from Theorem \ref{Bw}; (ii)$\Rightarrow$(iii) is obvious.
Therefore, it suffices to show (iii)$\Rightarrow$(i).

For any $x\in X$, we define $x_{n+1}=T^nx$, where $n\in\mathbb{N}\cup\{0\}$. First, if there exists some $N\in\mathbb{N}$ such that $x_N=x_{N+1}$, then $x_N$ is a fixed point of $T$. Therefore, for any $n\in\mathbb{N}$, we assume $x_n\not=x_{n+1}$. Then from \eqref{CMB}, we have
\[ d(x_{n+1},x_{n+2})=d(Tx_n,Tx_{n+1})<d(x_n,x_{n+1}). \]
Therefore, $\{d(x_n,x_{n+1})\}$ is a strictly decreasing sequence. Therefore, there exists some $\alpha\ge 0$ such that
\begin{align}
\label{eq2}
d(x_n,x_{n+1})\to \alpha, \quad (n\to\infty).
\end{align}
Assume $\alpha>0$. Then, for $\varepsilon=\alpha$, we take $\delta>0$ such that (iii) holds. Then, from \eqref{eq2}, there exists some $N\in\mathbb{N}$ such that $d(x_n,x_{n+1})<\varepsilon+\delta$ for any $n\ge N$. 
Hence, by (iii), we have $d(x_{n+1},x_{n+2})\le \varepsilon=\alpha$.
This contradicts \eqref{eq2}and the strict monotonicity of $\{d(x_n,x_{n+1})\}$.
Thus, $\alpha=0$, that is,
\begin{align}
\label{eq3}
\lim_{n\to\infty}d(x_n,x_{n+1})=0.
\end{align}
Next, we will show that $\{x_n\}$ is a G-Cauchy sequence. Fix $p\ge1$. Then, we have
\[ d(x_n,x_{n+p})\le d(x_n,x_{n+1})+\cdots+d(x_{n+p-1},x_{n+p})\to 0, \ (n\to\infty). \]
Therefore, since $\{x_n\}$ is a G-Cauchy sequence, there exists some $z\in X$ such that $\{x_n\}$ converges to $z$.

Next, we will show that $z$ is a fixed point of $T$. 
By the contractive condition \eqref{CMB}, we have
\[ d(z,Tz)\le d(z,x_{n+1})+d(Tx_n,Tz)\le d(z,x_{n+1})+d(x_n,z)\to 0, \ (n\to\infty). \]
Therefore, $Tz=z$ holds.

Finally, we show the uniqueness of the fixed point. Let $w$ be another fixed point of $T$. By the contractive condition \eqref{CMB}, we have:
\[ d(w,z)=d(Tw,Tz)<d(w,z) \]
This is a contradiction. Thus, $w=z$, which completes the proof of the theorem.
\end{proof}

\section*{Declarations}

\subsection*{Conflicts of interests}
The authors declare that there is no conflict of interest regarding the publication of this paper.

\subsection*{Data Availability Statements}
Data sharing is not applicable to this article as no datasets were generated or analyzed during the current study.

\end{document}